\newtheorem{theorem}{Theorem}[section]
\newtheorem{lemma}[theorem]{Lemma}
\newtheorem{proposition}[theorem]{Proposition}
\theoremstyle{definition}
\theoremstyle{remark}
\numberwithin{equation}{section}
\begin{document}

\title[About closed definable subsets]{About closed subsets definable \\ in Hensel minimal structures}

\author[Krzysztof Jan Nowak]{Krzysztof Jan Nowak}


\subjclass[2000]{Primary 03C65, 54C20; Secondary 03C98, 12J25.}

\keywords{Non-Archimedean geometry, Hensel minimality, zero locus, definable extension, cell decomposition, description of definable sets}

\date{}

\vspace{3ex}

\begin{abstract} The main purpose is to establish two theorems about closed 0-definable subsets $A$ of an affine space $K^{n}$ over a Hensel minimal field $K$. The first, being
 a non-Archimedean counterpart of one from o-minimal geometry, states that every such subset $A$ is the zero locus of a continuous 0-definable function on $K^{n}$. The second is a definable, non-Archimedean version of the Tietze--Urysohn extension theorem. The proofs use ubiquity of clopen sets in non-Archimedean geometry and a description of definable sets.
\end{abstract}

\maketitle


\vspace{5ex}

\section{Introduction}

\vspace{3ex}

This paper is a continuation of our research on Henselian valued fields from articles~\cite{K-N,Now-Sel,Now-Sym,Now-Sing,Now-Alant,Now-strat} and on Hensel minimal structures from~\cite{Now-tame,Now-Lip}.

\vspace{1ex}

We are concerned with Hensel minimal structures on non-trivially valued fields $K$ of equicharacteristic zero. And more precisely, with models $K$ of a 1-h-minimal theory in an expansion $\mathcal{L}$ of the language of valued fields.
The main purpose is to establish the following two results concerning closed 0-definable subsets $A$ of an affine space $K^{n}$.

\begin{theorem}\label{zero-set} (On zero locus)
Every closed 0-definable subset $A$ of $K^n$ is the zero locus $\mathcal{Z}(g) := \{ x \in K^{n}: \ g(x)=0 \}$ of a continuous 0-definable function $g$ on $K^n$.
\end{theorem}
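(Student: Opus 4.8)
The plan is to build the function $g$ locally on a cell decomposition adapted to $A$ and then glue, exploiting the fact that in a Hensel minimal structure the valuation topology has a basis of clopen (definable) balls, so a partition of $K^n$ into definable clopen pieces causes no loss of continuity at the seams. Concretely, I would first reduce to the case where $A$ is nonempty and proper (otherwise take $g\equiv 0$ or $g\equiv 1$), and then set up a 0-definable cell decomposition of $K^n$ compatible with $A$ — this is where the cell decomposition results underlying 1-h-minimality enter. On each cell $C$ that is disjoint from $A$ I want a continuous 0-definable $g_C:C\to K$ that is bounded away from $0$, and on each cell contained in $A$ I take $g_C\equiv 0$; then $g=\bigsqcup g_C$ should be the candidate, with continuity the only nontrivial point.

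\textbf{Key steps.} First, by compactness (the model-theoretic compactness argument advertised in the abstract), it suffices to produce, for each point $a\in K^n$, a 0-definable clopen neighbourhood $U$ of $a$ on which there is a continuous 0-definable function vanishing exactly on $A\cap U$, \emph{provided} these local functions can be chosen uniformly enough to be glued; alternatively one argues directly via cell decomposition without compactness. Second, for a point $a\notin A$, since $A$ is closed there is a clopen ball $B$ around $a$ with $B\cap A=\emptyset$, and on $B$ one can take $g$ to be a suitable definable function like $x\mapsto$ a fixed nonzero constant, or $x\mapsto$ the valuative distance to a point of $A$ inverted — but the clean choice is just a nonzero constant on that ball, so the real content is the boundary between $A$ and its complement. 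Third, and this is the crux: near a boundary point $a\in A$ one needs a continuous 0-definable $g$ with $\mathcal{Z}(g)\cap U = A\cap U$; here I would use the ``valuative distance'' function $d_A(x):=\sup\{\,v(x-y): y\in A\,\}$ (taken in the value group $\Gamma$, suitably coded), show it is 0-definable by minimality/definable-choice, show $d_A(x)=\infty \iff x\in A$ by closedness, and then compose with a 0-definable map $\Gamma\cup\{\infty\}\to K$ sending $\infty\mapsto 0$ and being continuous — e.g.\ pick a definable section and use an element of $K$ of the right valuation. Continuity of the resulting $g$ at points of $A$ follows because $d_A(x)\to\infty$ forces $g(x)\to 0$, and elsewhere $d_A$ is locally constant on small enough balls by the ultrametric inequality.

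\textbf{Main obstacle.} The hardest part is manufacturing a continuous 0-definable function $K^n\to K$ out of the valuative data: the naive distance $d_A$ lives in the value group, not in $K$, and one must descend to $K$ in a way that (i) is 0-definable, (ii) is genuinely continuous at the locus $\{d_A=\infty\}=A$, and (iii) does not accidentally vanish off $A$. In a general 1-h-minimal structure one cannot simply invert a uniformizer, so I expect to need the structure theory — a 0-definable ``angular component'' or at least a 0-definable map $K\setminus\{0\}\to\Gamma$ and a 0-definable right inverse up to finite ambiguity, handled by a definable-choice or cell-decomposition argument — together with a careful treatment of the value $\infty$. A secondary obstacle is the gluing: one must check that the clopen pieces of the cell decomposition fit together so that the piecewise-defined $g$ is continuous across cell boundaries, which works precisely because the cells can be taken to be relatively clopen in the natural stratified sense, so a point of $K^n$ has a neighbourhood meeting only ``compatible'' cells; verifying this uniformity is where the compactness argument does its real work.
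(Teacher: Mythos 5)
Your plan founders exactly at the point you yourself flag as the crux: the descent from the value group to $K$. In a general $1$-h-minimal structure there is no $0$-definable map $s:\Gamma\cup\{\infty\}\to K$ with $s(\infty)=0$, $s(\gamma)\neq 0$ for $\gamma\in\Gamma$, and $|s(\gamma)|\to 0$ as $\gamma\to\infty$ (equivalently, no definable section of the valuation, even up to finite ambiguity, on a cofinal subset of $\Gamma$). For instance $\mathrm{ACVF}_{0,0}$ is $1$-h-minimal, and there every definable map from the value group (or from $RV$) into the valued field has finite image, by orthogonality/stable embeddedness of those sorts; so no ``angular component'' or definable-choice device can produce the required $s$, and the composite $g=s\circ d_A$ cannot be formed. (There are secondary problems too: the supremum defining $d_A$ need not be attained or even exist in $\Gamma$, and a function prescribed only through its absolute value need not be continuous off $A$ unless you also arrange local constancy of the \emph{value}, not just of the valuation --- which again is a choice problem.) The local/compactness framework and the clopen gluing you describe are fine --- on a finite definable clopen partition one just multiplies the local functions together, as in Lemma~\ref{partition} --- but without a substitute for $s$ the construction near boundary points of $A$ does not get off the ground.

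The paper avoids this issue entirely by never leaving the valued-field sort: it inducts on $\dim A$, uses parametrized cell decomposition to reduce (after the boundedness reduction via coordinate inversions) to the case where $A$ is the closure of a set $E$ that is a finite cover of an open $F\subset K^k$, cut out by monic polynomials $P_j(y,Z_j)$ whose coefficients $b_{ji}:F\to K$ are $0$-definable $K$-valued functions; these coefficients, multiplied by a continuous function $f$ vanishing exactly on $\partial F$ (supplied by the induction hypothesis), extend by zero to continuous functions on $K^k$, and the spurious zeros over $K^k\setminus F$ are cut away by intersecting with a clopen tube $\widetilde{E}$ around $E$ whose closure is again a zero set by induction on $\partial E$. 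In other words, the $K$-valued functions measuring proximity to $A$ are manufactured from the coordinates of points of $A$ itself (via the finite-fiber polynomial description), not from valuative distances. If you want to rescue your approach, you would need to replace $d_A$ by such coordinate-built data; as written, the key step fails.
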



\begin{theorem}\label{ext} (On definable extension)
Let $A$ be a closed 0-definable subset of $K^{n}$. Then
every continuous 0-definable function $f: A \to K$ can be extended to a continuous 0-definable function $F: X \to K$.
\end{theorem}

The first result is a counterpart of one from o-minimal geometry~(see e.g.~\cite[Proposition~2.7.5]{BCR} and \cite[Property~4.22]{Dries-M}); the second is a definable non-Archimedean version of the Tietze--Urysohn extension theorem. The proofs of these theorems make use of a model-theoretic compactness argument and ubiquity of clopen sets in non-Archimedean geometry.

\vspace{1ex}

Note that the axiomatic theory of Hensel minimal structures was introduced by Cluckers--Halupczok--Rideau~\cite{C-H-R}, whose research followed numerous earlier attempts to find suitable approaches in geometry of valued fields, which would realize, like o-minimality in real geometry, the postulates of both tame topology and tame model theory. Those attempts led to various, axiomatically based concepts, including C-minimality~\cite{Has-Macph-1,Macph}, P-minimality~\cite{Has-Macph-2}, V-minimality~\cite{H-K}, b-minimality~\cite{C-L}, tame structures~\cite{C-Com-L,C-Fo-L}, and eventually Hensel minimality.

\vspace{1ex}

In the purely topological case, non-Archimedean extension problems were investigated by Ellis (cf.~\cite{El-1,El-2}). He studied the extension of continuous maps defined on closed subsets of zero-dimensional spaces $X$ with values in various types of metric spaces; in particular, continuous maps from ultraparacompact (ultranormal) spaces $X$ into a (separable) complete metric space. He obtained, among others, a non-Archimedean analogue of the Tietze--Urysohn theorem on extension of continuous functions from a closed subset of an ultranormal space $X$ into a locally compact field with non-Archimedean absolute value.

\vspace{1ex}

Often the problem of extension can be related to that of retractions. The existence of a continuous retraction on any closed subset $A$ of an ultrametric space $X$ was first proven in~\cite{Da} using some of Ellis' ideas~\cite{El-2}, and in~\cite{BDHM} by means of a well-order on $X$. What was even proven in the latter paper was the existence of a Lipschitz retraction with any Lipschitz constant $>1$. More recently, the topic of non-Archimedean extensions and retractions was discussed in~\cite{K-K-S}.

\vspace{1ex}

In the definable, non-Archimedean and non-locally compact settings, however, even the existence of definable continuous retractions onto arbitrary closed subsets seems to be less plausible due to the lack of Skolem functions. In particular, there exist 0-definable open balls without any 0-definable point.

\vspace{1ex}

In Hensel minimal settings, a definable, non-Archimedean and non-locally compact analogue of Kirszbraun's theorem on Lipschitz extension, with an arbitrarily small magnification of the Lipschitz constant, was established in our paper~\cite{Now-Lip} . A key role in the proof was played by our concept of a Lipschitz package with skeleton.

\vspace{1ex}

In the definable $p$-adic, thus locally compact settings, the existence of Lipschitz retractions, with constant 1, and Kirszbraun's theorem on Lipschitz extension, with the Lipschitz constant preserved, were proven in~\cite{C-M} via simultaneous induction and using Skolem funtions, which are available in this case (cf~\cite{Dries}).

\vspace{1ex}

In the purely topological case, the following ultrametric version of Kirsz\-braun's extension theorem was established (using of the Zorn's lemma) by Bhaskaran~\cite[Theorems~1.2 and~1.5]{Bas}:

\vspace{1ex}

\begin{em}
Let $K$ be a rank one valued field, $X$ be an ultrametric space, and $A \subset X$. Then every bounded Lipschitz function $f:A \to K$ extends to a bounded Lipschitz function $F:X \to K$, with the Lipschitz constant and supremum norm preserved, whenever the field $K$ or the subspace $A$ is spherically complete.
\end{em}

\vspace{1ex}

Moreover, he proved that the field $K$ is spherically complete if and only if the above extension theorem holds for every ultrametric space $X$ and every $A \subset X$. It is thus not always possible to extend Lipschitz functions to the whole ultrametric space with the same Lipschitz constant. Hence there exist closed subsets $A$ of an ultrametric space $X$ that are not Lipschitz retracts of $X$ with constant 1.

\vspace{1ex}

In the Hensel minimal settings, we proved in~\cite{Now-tame} that every definable locally closed subset of an affine space $K^{n}$ and, more generally, every Hausdorff definable LC-space (a concept including definable manifolds) is definably ultranormal and definably ultraparacompact.

\vspace{1ex}

Finally, let us mention some intricacies behind ultrametrics. A space $X$ is ultrametrizable if and only if (cf.~\cite{Groot}) it is both metrizable and ultranormal (equivalently, normal of large inductive dimension zero). Hence a separable metrizable space $X$ is ultrametrizable whenever it is ultraregular (equivalently, regular of small inductive dimension zero), because the small and large dimensions coincide for Lindel\"{o}f spaces (\cite[Theorem~7.1.11]{Eng}) and, a fortiori, for separable metric spaces.
Yet metrizability and ultraregularity do not entail ultrametrizability in general, because non-separable metric spaces may have discrepancy between the small and large  dimensions, as shown by Roy~\cite{Roy,Roy-2}.

\vspace{1ex}

\section{Preparatory results}

We shall adopt the multiplicative convention $|\cdot|$ for the valuation of the ground field $K$, and put
$$|x| := \max \, \{ |x_{1}|, \ldots, |x_{n}| \}, \ \ x \in K^{n}. $$

\vspace{1ex}

Suppose that $A$ is the union of a finite number of 0-definable subsets $A_{i}$ of $K^{n}$. Clearly, Theorem~\ref{zero-set} holds for $A$ if it holds for every set $A_{i}$. To achieve such a partition property for the extension theorem, we need the following result on separation of closed definable sets.

\begin{proposition}\label{separate}
Let $A_{1}$ and $A_{2}$ be closed 0-definable subsets of $K^{n}$. Then there are open 0-definable subsets $U_{1}$ and $U_{2}$ of $K^{n}$ such that

1) $A_{1} \setminus A_{2} \subset U_{1}$ and $A_{2} \setminus A_{1} \subset U_{2}$;

2) $U_{1}$ and $U_{2}$ are clopen subsets of $K^{n} \setminus (A_{1} \cap A_{2})$;

3) $U_{1} \cup (A_{1} \cap A_{2})$ and $U_{2} \cup (A_{1} \cap A_{2})$ are closed subsets of $K^{n}$.
\end{proposition}

\begin{proof}
It is not difficult to show that the sets
$$ U_{1} := \{ x \in K^{n}: \, \exists \, a_{1} \in A_{1} \setminus A_{2} \ \forall \, a_{2} \in A_{2} \ \; |x-a_{1}| < |x-a_{2}| \, \} $$
and
$$ U_{2} := \{ x \in K^{n}: \, \exists \, a_{2} \in A_{2} \setminus A_{1} \ \forall \, a_{1} \in A_{1} \ \; |x-a_{2}| < |x-a_{1}| \, \} $$
satisfy the conclusion.
\end{proof}

\begin{proposition}\label{partition} (Partition Property)
Let $A_{i}$, $i=1,\ldots,s$, be closed 0-definable subsets of $K^{n}$. Then Theorem~\ref{ext} holds for the union
$$ A := \bigcup_{i=1}^{s} A_{i} $$
if it holds for every subset $A_{i}$.    
\end{proposition}

\begin{proof}
By induction, it is enough to consider the case $s=2$. Let $f: A_{1} \cup A_{2} \to K$ be a continuous 0-definable function. By the assumption, we can assume that $f$ vanishes on $A_{2}$. Consider a continuous 0-definable extension $F_{1}: K^{n} \to K$ of the restriction of $f$ to $A_{1}$ and keep the notation of Proposition~\ref{separate}. Then the function
$$ F(x) = \left\{ \begin{array}{cl}
                        F_1 (x) & \mbox{ if } \ x \in U_{1}, \\
                        0 & \mbox{ if } \ x \in K^{n} \setminus U_{1},
                        \end{array}
               \right.
$$
is an extension of $f$ we are looking for.
\end{proof}

We shall still need an elementary lemma.

\begin{lemma}\label{lem-inter}
There is a continuous 0-definable function $g_{n}: K_{x}^{n} \to K$ such that
$$ g_{n}(x) = 0 \ \ \Longleftrightarrow \ \ x=0. $$
\end{lemma}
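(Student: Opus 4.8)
The plan is to produce $g_t$ by an entirely explicit formula rather than by any compactness or cell–decomposition argument, exploiting the fact that the non‑archimedean absolute value makes the ``largest coordinate'' map continuous. Concretely, I would set $g_t(w) := w_{j(w)}$, where $j(w)$ is the \emph{least} index $j \in \{1,\dots,t\}$ for which $|w_j| = \max_k |w_k|$; its graph is then defined by the parameter–free formula $\bigvee_{j=1}^{t} \big( z = w_j \wedge \bigwedge_{k} |w_k| \le |w_j| \wedge \bigwedge_{i<j} |w_i| < |w_j| \big)$, so $g_t$ is a $0$–definable function. The required equivalence is immediate: $g_t(0) = 0$ by inspection, while for $w \neq 0$ one has $|g_t(w)| = \max_k |w_k| > 0$, hence $g_t(w) \neq 0$.

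The only real content is continuity, which I would check by separating the cases $a \neq 0$ and $a = 0$ for a point $a \in K^t$. If $a \neq 0$, set $\rho := |a| = \max_k |a_k| > 0$ and pick $r \in |K|$ with $0 < r \le |a_k|$ for every $k$ with $a_k \neq 0$ (so in particular $r \le \rho$). For $w \in B(a,r)$ the ultrametric inequality gives $|w_k| = |a_k|$ whenever $a_k \neq 0$ and $|w_k| < r \le \rho$ whenever $a_k = 0$; hence $\max_k |w_k| = \rho$, attained on exactly the index set $\{k : |a_k| = \rho\}$, which is independent of $w$. Therefore $j(w)$ is constant on $B(a,r)$, say equal to $j$, and $g_t$ coincides on $B(a,r)$ with the coordinate projection $w \mapsto w_j$, which is continuous; so $g_t$ is continuous at $a$. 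If $a = 0$, then $|g_t(w)| = |w|$ for all $w$, so $g_t(w) \to 0 = g_t(0)$ as $w \to 0$.

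I expect this continuity verification to be the only place where care is needed: read naively, the formula presents $g_t$ as glued from the \emph{closed} pieces $\{\, |w_j| \ge |w_k| \text{ for all } k \,\}$, a recipe that fails to yield a continuous function in an archimedean setting, and the argument must make explicit that here every sufficiently small ball ``freezes'' the absolute values of all nonzero coordinates, so that near each nonzero point the piecewise description collapses to a single branch (the origin being disposed of by the trivial identity $|g_t(w)| = |w|$). If one prefers to isolate this phenomenon in the smallest case, the same thing can be checked directly for the two–variable function $g_2(x,y) := x$ for $|x| \ge |y|$ and $g_2(x,y) := y$ otherwise, and then $g_t$ can be obtained by iteration, $g_t(w) := g_2\big(w_1, g_{t-1}(w_2,\dots,w_t)\big)$, with induction on $t$.
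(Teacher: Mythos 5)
Your construction coincides with the paper's: the paper defines $g_2$ by the same case split ($w_1$ if $|w_2|\le |w_1|$, else $w_2$) and obtains $g_t$ by exactly the iteration you mention in your last paragraph, which produces precisely your ``coordinate of maximal absolute value, with ties broken toward the least index'' function. Your explicit continuity check via the ultrametric inequality (small balls freeze the absolute values of the nonzero coordinates, so the piecewise description collapses to a single branch near any $a\neq 0$, and $|g_t(w)|=|w|$ handles $a=0$) is correct and in fact supplies the verification that the paper's proof, which consists only of the definition, leaves implicit.
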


\begin{proof}
We define the functions $g_{n}$ inductively. Put
$$ g_{2}(x_{1},x_{2}) := \left\{ \begin{array}{cl}
                        x_1 & \mbox{ if } \ |x_2| \leq |x_1|, \\
                        x_2 & \mbox{ if } \ |x_1| < |x_2|,
                        \end{array}
               \right.
$$
and
$$ g_{k+1}(x_{1},\ldots,x_{k+1}) := g_{2}(g_{k}(x_{1},\ldots,x_{k}),x_{k+1}). $$
\end{proof}

The proofs of the main theorems will be proven by induction with respect to the dimension $k = \dim A$. The case $k=0$ is straightforward by Lemma~\ref{lem-inter}.

\vspace{1ex}

In order to prove Theorem~\ref{zero-set}, we can and shall additionally assume that the subset $A$ is bounded. Indeed, put
$$ A = \bigcup \, \{ A_{I}: \ I \subset \{ 1,\ldots,n \} \} \ \ \text{with} \ \  A_{I} := A \cap B_{I}, $$
where
$$ B_{I} := \{ x \in K^{n}: \ |x_{i}| \leq 1 \ \ \text{for} \ i \in I \ \ \text{and} \ \ |x_{j}| > 1 \ \ \text{for} \ j \not \in I \}; $$
and let
$$ \phi_{I}: B_{I} \to K^{n}, \ \ \ \phi_{I}(x) = y, $$
where $y_{i} = x_{i}$ for $i \in I$ and $y_{j} = 1/x_{j}$ for $j \not \in I$.
Then $B_{I}$ are clopen subsets of $K^{n}$, $\phi_{I}$ are homeomorphisms of $B_{I}$ onto the bounded subsets $D_{I} := \phi_{I}(B_{I})$, and $E_{I} := \phi_{I}(A_{I})$ are closed subsets of $D_{I}$. Further, the (topological) closures $\overline{E_{I}}$ of the sets $E_{I}$ are bounded subsets of $K^{n}$. Therefore, if $\overline{E_{I}}$ is the zero locus of a continuous 0-definable function $g_{I}: K^{n} \to K$, then $A_{I}$ is the zero locus of the function $f_{I} := g_{I} \circ \phi_{I}$ which is a continuous 0-definable function on the clopen subset $B_{I}$. This justifies the reduction.

\vspace{1ex}

\section{A description of definable sets}

In this section, we shall divide the set A into closed 0-definable pieces appropriately prepared for the proofs of the main results. Hence and by Proposition~\ref{partition}, the problem reduces, via throwing away pieces of lower dimension and treating them by induction, to a single prepared piece with a suitable description. This will be done using parametrized cell decomposition and a model-theoretic compactness argument.

\vspace{1ex}

So assume that $A$ is a closed 0-definable subset of $K^{n}$ of dimension $k$ with $0<k<n$. For coordinates $x = (x_1,\ldots,x_n)$ in the affine space $K^{n}$, write
$$ x=(y,z), \ \ y=(x_1,\ldots,x_k), \ \ z=(x_{k+1},\ldots,x_n). $$
Let $\pi: K^n \to K^k$ be the projection onto the first $k$ coordinates.
For $y \in K^k$, denote by $A_y \subset K^{n-k}$ the fiber of the set $A$ over the point $y$.

\vspace{1ex}

Denote by $\overline{E}$ and $\partial E := \overline{E} \setminus E$ the closure and frontier (in the valuation topology) of a set $E$, respectively. We have the basic dimension inequality (cf.~\cite[Proposition~5.3.4]{C-H-R}):
\begin{equation}\label{front}
  \dim \partial E < \dim E.
\end{equation}

\vspace{1ex}

By  parametrized cell decomposition (cf.~\cite[Theorem~5.7.3]{C-H-R}), we can assume that $A$ is (perhaps after a permutation of variables) the closure of a parametrized 0-definable cell $C = (C_{\xi})_{\xi}$ of dimension $k$ of cell-type $(1,\ldots,1,0,\ldots,0)$, with $RV$-sort parameters $\xi$ and centers $c_{\xi}$. Since definable $RV$-unions of finite sets stay finite, the restriction of $\pi$ to $C$ has finite fibers of bounded cardinality.

\vspace{1ex}

Further, by suitable 0-definable partitioning, we can assume that $A$ is the closure $\overline{E}$ of a 0-definable subset $E$ of dimension $k$ such that all the fibers
$$ E_{y}, \ \ y \in F:= \pi(E), $$
have the same cardinality, say $s$, and the sets
$$ E_{j}(y) = \{ c_{ji}(y), \ i=1,\ldots,s_j \}, \ \ j=k+1,\ldots,n, $$
of the $j$-th coordinates of points from the fibers $E_y$ have the same cardinality, say $s_j$. Since the fibers $E_{y}$ are finite, the set $F$ is of dimension $k$, and we can, throwing away pieces of lower dimension, assume that $F$  is an open 0-definable subset of $K^{k}$.

\vspace{1ex}

Now consider the polynomials
$$ P_j (y,Z_j) := \prod_{z \in C_j(y)} \, (Z_j - z) = \prod_{i=1}^{s_{j}} \, (Z_{j} - c_{ji}(y)), \ \ y \in F, \ j=k+1,\ldots,n, $$
Then
$$ P_j (y,Z_j) = Z_j^{s_j} + b_{j,1}(y) Z_j^{s_{j-1}} + \ldots + b_{j,s_j}(y), \ \ j=k+1,\ldots,n, $$
where $b_{j,i}: F \to K$, $i=1,\ldots,s_j$, are 0-definable functions.

\vspace{1ex}

We still need the following lemma on good directions, resembling to some extent the concept of primitive elements from field theory and algebraic geometry.

\begin{lemma}\label{Lem-lin}
There exist a finite number of linear functions
$$ \lambda_{l}: K^{n-k} \to K, \ \ l=1,\ldots,p, $$
with integer coefficients such that,
for every $y \in F$, $\lambda_{l}$ is injective on the Cartesian product $\prod_{j=k+1}^{n} \, E_j(y)$ for some $l=1,\ldots,p$.
\end{lemma}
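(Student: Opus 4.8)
The plan is to split the statement into a \emph{pointwise} part and its \emph{uniformisation}. For a single fixed $y \in F$ one checks directly that \emph{some} linear form with integer coefficients is injective on the finite set $\prod_{j=k+1}^{n} C_j(y)$; then, since this works in every model of the ambient 1-h-minimal theory, the model-theoretic compactness argument already exploited in Lemma~\ref{partition} and in the reductions above turns the infinite family of such forms into a finite list that works for all $y$ at once. Throughout, I use that $\matZ$ embeds into $K$ (the field $K$ being of characteristic zero), so that for $a = (a_{k+1},\dots,a_n) \in \matZ^{n-k}$ the form $\lambda_a(w) = \sum_{j=k+1}^{n} a_j w_j$ is a genuine $0$-definable additive map $K^{n-k} \to K$.

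For the pointwise step, fix $y$ and put $P(y) := \prod_{j=k+1}^{n} C_j(y) \subseteq K^{n-k}$. This set is finite, of cardinality $\prod_j s_j$, which is bounded independently of $y$ because the fibres of $\pi|_{C}$ have bounded cardinality (Corollary~2.6.7 of~\cite{C-H-R}). A form $\lambda_a$ fails to be injective on $P(y)$ exactly when $\sum_{j=k+1}^{n} a_j d_j = 0$ for some nonzero difference $d = w - w'$ of two points of $P(y)$. For each such $d$, the set $\{\, a \in \matQ^{n-k} : \sum_j a_j d_j = 0 \,\}$ is a \emph{proper} $\matQ$-linear subspace of $\matQ^{n-k}$ --- it cannot contain every standard basis vector, since that would force $d = 0$. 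Hence the "bad" parameters lie in a finite union of proper $\matQ$-subspaces, and such a union cannot contain $\matZ^{n-k}$, because the product of the corresponding nonzero linear forms would be a nonzero polynomial vanishing on $\matZ^{n-k}$. So there is an $a \in \matZ^{n-k}$ with $\lambda_a$ injective on $P(y)$.

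To globalise, set $F_a := \{\, y \in F : \lambda_a \text{ is injective on } P(y) \,\}$ for $a \in \matZ^{n-k}$; each $F_a$ is $0$-definable, integers being $0$-definable elements of $K$. The pointwise step applies verbatim inside any model of the 1-h-minimal theory --- there the fibres $C_j(y)$ are still finite, as the bound $|C_j(y)| \le s_j$ is first-order --- so $\bigcap_{a \in \matZ^{n-k}} (F \setminus F_a) = \emptyset$ in every such model; equivalently, the partial type $\{\, y \in F \,\} \cup \{\, y \notin F_a : a \in \matZ^{n-k} \,\}$ is inconsistent with the theory. By compactness, finitely many of the $F_a$ already cover $F$, say $F = F_{a^{(1)}} \cup \dots \cup F_{a^{(p)}}$, and $\lambda_l := \lambda_{a^{(l)}}$, $l = 1,\dots,p$, are the required linear functions.

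The only delicate points are bookkeeping ones. The pointwise existence is essentially immediate once $\matQ \subseteq K$, so the real content --- and the reason a compactness argument is used rather than an explicit bound on $p$ --- is the uniformisation: the bad directions attached to different points $y$ may be spread arbitrarily among the $K$-lines through the origin in $K^{n-k}$, so no finite list can be written down by hand. The one thing to watch is that the finiteness of the fibres $C_j(y)$, which is what keeps the collection of bad hyperplanes finite in the pointwise step, is expressed by a first-order bound, so that it persists in the elementary extension produced by compactness; this is guaranteed by the boundedness of $|E_y|$ secured earlier.
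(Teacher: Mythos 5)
Your proof is correct and follows exactly the route the paper intends: the paper's own proof is just the one-line remark that the conclusion follows by a routine model-theoretic compactness argument, and you supply precisely that routine (pointwise existence of an integer linear form via the fact that finitely many proper $\mathbb{Q}$-subspaces cannot cover $\mathbb{Z}^{n-k}$, followed by compactness applied to the $0$-definable sets $F_a$, using that the first-order bounds $|C_j(y)|=s_j$ persist in all models). Nothing further is needed.
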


\begin{proof}
The conclusion follows by a routine model-theoretic compactness argument.
\end{proof}

Hence and by a further partitioning, we can assume that one linear function
$$ \lambda: K^{n-k} \to K $$
with integer coefficients is injective on every product
$$ \prod_{j=k+1}^{n} \, E_{j}(y), \ \ y\in F. $$
Consider now the polynomial
$$ P(y,Z) := \prod_{z \in E_y} \, (Z - \lambda(z)) = Z^s + b_1(y) Z^{s-1} + \ldots + b_s(y), $$
where $b_{j}: F \to K$ are 0-definable functions.
Then
\begin{equation}\label{descript}
   E = \{ x =(y,z) \in F \times K^{n-k}: \
\end{equation}
$$ P_{k+1} (x_1,\ldots,x_k,x_{k+1}) = \ldots = P_{n} (x_1,\ldots,x_k,x_n) = $$
$$ P(x_1,\ldots,x_k,\lambda(x_{k+1},\ldots,x_{n})) = 0 \}. $$

The sets of all points at which the functions $b_{ji}(y)$ and $b_{i}(y)$ are not continuous are 0-definable subsets of $F$ of dimension $<k$ (cf.~\cite[Theorem~5.1.1]{C-H-R}).
Again, throwing pieces of lower dimension, we can additionally assume that $b_{ji}(y)$ and $b_{i}(y)$ are continuous functions on the open subset $F$. Then $E$ is a closed subset of $F \times K^{n-k}$, and thus
\begin{equation}\label{frontier}
  \partial E \subset \partial F \times K^{n-k}.
\end{equation}

\vspace{1ex}

In this manner, we have reduced the proofs of the theorems under study to the case where $A$ is the closure of the set $E$ described above by formula~\ref{descript}.
Moreover, in the proof of the first theorem, we can assume without loss of generality, as outlined before, that the set $E$ is bounded.

\vspace{1ex}

In the sequel, we shall still need the following set
$$ U := \{ (y,z) \in F \times K^{n-k}: $$
$$ |z-c_{i}(y)| < \min \, \{ |c_{i}(y) - c_{j}(y)|: \ i,j=1,\ldots,s, \ i \neq j \},  $$
$$ \forall \, b \in \partial \, F \ |z - c_{i}(y)| < |y-b| \, \}. $$
It is not difficult to check that $U$ is a 0-definable clopen subset of $(K^{k} \setminus \partial F) \times K^{n-k}$ containing $E$, $\partial U = \partial E$, and thus $U \cup \partial E$ is a closed subset of $K^{n}$.

\section{Proof of Theorem~\ref{zero-set}}

Let $A$ be a bounded closed 0-definable subset of $K^{n}$ of dimension $k$ with $0<k<n$, and suppose the conclusion of the theorem holds for all subsets of dimension $<k$.

\vspace{1ex}

By the partition property and the description of definable sets from Section~3, we can assume that $A $ is the closure $\overline{E}$ of the set $E$ given by formula~\ref{descript}.
Since $F$ is an open subset of $K^k$, its frontier $\partial F$ is a closed subset of $K^k$ of dimension $<k$ (inequality~\ref{front}). By the induction hypothesis, $\partial F$ is the zero locus of a continuous 0-definable function $f: K^k \to K$.

\vspace{1ex}

Keeping the notation from Section~3, the functions $b_{ji}(y)$ are bounded because so are the sets $A$ and $E$. Therefore the functions
$$ f(y) \cdot b_{ji}(y) \ \ \text{and} \ \ f(y) \cdot b_{i}(y) $$
extend by zero through $\partial F$ to continuous functions on $\overline{F}$. And then they extend by zero off $F$ to continuous 0-definable functions on $K^k$.

\vspace{1ex}

We can thus regard the coefficients of the following polynomials (in the indeterminates $Z_{k+1}, \ldots,Z_{n}$ and $Z$, respectively):
$$ Q_{k+1} (y,Z_j) := f(y) \cdot P_{k+1} (y,Z_{k+1}), \ \ldots \ , \ Q_n (y,Z_n) := f(y) \cdot P_n (y,Z_n) $$
and
$$ Q (y,Z) := f(y) \cdot P (y,Z), $$
as continuous 0-definable functions on  $K^k$ vanishing off the subset $F$. Put
$$ G := \{ x \in K^n: \ Q_{k+1} (x_1,\ldots,x_k,x_{k+1}) = \ldots = Q_{n} (x_1,\ldots,x_k,x_n) = $$
$$ Q(x_1,\ldots,x_k,\lambda(x_{k+1},\ldots,x_{n})) = 0 \}. $$
Then
\begin{equation}\label{eq1}
  G \cap (F \times K^{n-k}) = E
\end{equation}
and
\begin{equation}\label{eq2}
  G \cap ((K^k \setminus F) \times K^{n-k}) = (K^k \setminus F) \times K^{n-k}.
\end{equation}
But by the induction hypothesis, $\partial E$ is the zero locus of a continuous 0-definable function $e: K^n \to K$.
Then the function
$$ \widetilde{e}(x) = \left\{ \begin{array}{cl}
                        0 & \mbox{ if } \ x \in U, \\
                        e(x) & \mbox{ if } \ x \in K^n \setminus U.
                        \end{array}
               \right.
$$
is continuous 0-definable with zero locus $U \cup \partial E$:
$$ U \cup \partial E = \{ x \in K^n: \ \widetilde{e}(x) = 0 \}. $$
Hence and by equalities~\ref{eq1} and~\ref{eq2}, we get
$$
  A = E \cap \partial E = \{ Q_{k+1} (x_1,\ldots,x_k,x_{k+1}) = \ldots = Q_{n} (x_1,\ldots,x_k,x_n) =
$$
$$  Q(x_1,\ldots,x_k,\lambda(x_{k+1},\ldots,x_{n})) = \widetilde{e}(x) =0 \} \subset K^n. $$
Now it follows immediately from Lemma~\ref{lem-inter} that $A$ is the zero locus of the continuous 0-definable function
$$ g_{n-k+2}(Q_{k+1} (x), \ldots, Q_{n} (x),Q(x), \widetilde{e}(x)), $$
as desired.

\vspace{1ex}

Finally, suppose that $A$ is of dimension $k=n$. Then $A = U \cup F$ for an open 0-definable subset $U \subset K^n$ and a closed 0-definable subset $F \subset K^n$ of dimension $<n$ such that $U$ is a clopen subset of $K^{n} \setminus F$. By the induction hypothesis, $F$ is the zero locus of a continuous 0-definable function $f: K^n \to K$. Then $A$ is the zero locus of the following continuous definable function
$$ g(x) = \left\{ \begin{array}{cl}
                        0 & \mbox{ if } \ x \in A, \\
                        f(x) & \mbox{ if } \ x \in K^n \setminus A.
                        \end{array}
               \right.
$$
This completes the proof of Theorem~\ref{zero-set}.    \hspace*{\fill} $\Box$

\vspace{1ex}

\section{Proof of Theorem~\ref{ext}}

We shall keep the notation from Section~3. Let $f: A \to K$ be a continuous 0-definable function on a closed subset $A$ of $K^{n}$ of dimension $k$ with $0 < k < n$. Suppose that the conclusion holds for the subsets of $K^n$ of dimension $ <k$.

\vspace{1ex}

Again, by the partition property (Proposition~\ref{partition}), we are reduced to the case where the set $A$ is the closure $\overline{E}$ of the set $E$ given by formula~\ref{descript}.

\vspace{1ex}

By the induction hypothesis, the restriction of $f$ to the frontier $\partial E$ extends to a continuous 0-definable function on $K^{n}$. Therefore we can assume that the function $f$ vanishes on $\partial E$. Then it is not difficult to check that the function given by the formula
$$ F(y,z) := \left\{ \begin{array}{cl}
                        f(y,c_{i}(y)) & \mbox{ if } \ (y,z) \in U, \ |z-c_{i}(y)| = \omega(y,z), \\
                        0 & \mbox{ otherwise,}
                        \end{array}
               \right.
$$
where $\omega(y,z) = \min \, \{ |z-c_{j}(y)|: \ j=1,\ldots,s \}$, is a continuous 0-definable extension of $f$ we are looking for. The uniqueness of the index $i$ in the above definition follows directly from the definition of the set $U$.

\vspace{1ex}

Finally, suppose that $A$ is of dimension $k=n$. As before, $A = U \cup F$ for an open 0-definable subset $U \subset K^n$ and a closed 0-definable subset $F \subset K^n$ of dimension $<n$ such that $U$ is a clopen subset of $K^{n} \setminus F$. Again, by the induction hypothesis, we can assume that the function $f$ vanishes on $F$.
Then the function $F: K^{n} \to K$ given by the formula
$$ F(x) := \left\{ \begin{array}{cl}
                        f(x) & \mbox{ if } \ x \in U, \\
                        0 & \mbox{ otherwise, }
                        \end{array}
               \right.
$$
is a continuous 0-definable extension of $f$ we are looking for. This finishes the proof of Theorem~\ref{ext}.      \hspace*{\fill} $\Box$

\vspace{2ex}

\vspace{3ex}

\begin{small}
Institute of Mathematics

Faculty of Mathematics and Computer Science

Jagiellonian University

ul.~Profesora S.\ \L{}ojasiewicza 6

30-348 Krak\'{o}w, Poland

{\em E-mail address: nowak@im.uj.edu.pl}
\end{small}

\end{document}